\title{Indices of equilibrium points of linear control systems with saturated state feedback}
\author{Xiao-Song Yang\footnote{1. School of Mathematics and Statistics, Huazhong University of Science and Technology, Wuhan 430074, Hubei, People’s Republic of China. 2. Hubei Key Laboratory of Engineering Modeling and Science Computing, Huazhong University of Science and Technology, Wuhan 430074, Hubei, People’s Republic of China. Email:yangxs@hust.edu.cn.} \: and Weisheng Huang\footnote{School of Mathematics and Statistics, Huazhong University of Science and Technology, Wuhan 430074, Hubei, People’s Republic of China. Email:huangws\_18@hust.edu.cn.}}
\date{}
\begin{document}

\maketitle

\begin{abstract}
In this paper we investigate some properties of equilibrium points in n-dimensional linear control systems with saturated state feedback. We provide an index formula for equilibrium points and discuss its relation to boundaries of attraction basins in feedback systems with single input. In addition, we also touch upon convexity of attraction basin.\par
\keywords{Index, Equilibrium point, Saturated state feedback, Linear control system}
\end{abstract}

\section{Introduction}
Attraction basin of an attractor is a domain in which every point has the property that a trajectory starting at it approaches the attractor as time goes to infinity. Clearly, attraction basin is a central research focus in dynamical system theory and control theory because of its practical significance as well as a theoretical challenge.

One topic of much importance is the structure of the boundary of the attraction basin of a locally stable equilibrium point. To some extent, the structure of the basin boundary provides a measure of how a trajectory in the attraction basin approaches the equilibrium point. For example, fractal boundary may give rise to transient chaotic behaviour for trajectories near the boundary. Thus a perturbed state probably goes through a long period irregular motion before going to the stable equilibrium state!

For linear systems with saturated state feedback, the topic on the structure of the basin boundary has received much attention in recent decades \cite{hu2001control,Kapila2002Actuator,2011Stability,Corradini2012Control,Li2018stability,Rodolfo1997Linear,Hu2002Convex}. 

Among these is the elegant and complete treatment \cite{hu2001control} on the boundary of the attraction basin (the domain of stability as termed as in \cite{hu2001control}) and on convexity of the attraction basin of the origin in two dimensional situation. Now it is well known that the boundary of the attraction basin is a convex differentiable closed curve, which is a limit cycle of closed loop system. In higher dimensional cases, even though there are many publications on estimation of attraction basin in linear control system with stabilizing saturated state feedback, no such satisfactory results have been obtained up to now for anti-stable linear systems, to the best knowledge of the authors. It is well-known that even in 3-dimensional case, studying dynamics of a nonlinear system is nearly a formidable task, and this is also the case in studying the boundaries of attraction basins of attractors.

Since existence and distribution of equilibrium points affect estimation of attraction basin, we will investigate properties of equilibrium points in n-dimensional linear control systems with saturated state feedback and mainly focus on the single input case. In addition, we also touch upon convexity of attraction basin.

\section{Indices of differentiable maps}
Consider a differentiable map $f: \mathbb{R}^n \to \mathbb{R}^n$. Suppose that there is an $r>0$ such that $f(x) \neq 0$ for all $x \in S^{n-1}(r)$, where
\begin{equation*}
	S^{n-1}(r) = \{ x \in \mathbb{R}^n \vert \, \lVert x \rVert = r \}.
\end{equation*}

Define the sphere map $\bar{f}: S^{n-1}(r) \to S^{n-1}(1)$ as
\begin{equation*}
	\bar{f}(x) = \frac{f(x)}{\lVert f(x) \rVert}, \; x \in S^{n-1}(r).
\end{equation*}

\begin{definition}
	The index of $f$, denoted by $\text{ind} \, f_{B(r)}$, on $B(r)$ is defined as the topological degree of $\bar{f}$ where
\begin{equation*}
	B(r) = \{ x \in \mathbb{R}^n \vert \, \lVert x \rVert \leq r \}.
\end{equation*}
\end{definition}

For degree of differentiable maps, see \cite{1965Topology}, for degree of continuous map, the reader is referred to \cite{Brown1993A}.

In the following we give a simple result which is useful for the arguments in section 3.

\begin{proposition}
	Consider a differentiable map $F: \mathbb{R}^n \to \mathbb{R}^n$,
	\begin{equation*}
		F(x) = f(x) + g(x),\; x \in \mathbb{R}^n.
	\end{equation*}
	Suppose that there is an $r>0$ such that
	\begin{equation*}
		\lVert f(x) \rVert > \lVert g(x) \rVert,\; x \in S^{n-1}(r).
	\end{equation*}
	Then
	\begin{equation*}
		\textnormal{ind} \, F_{B(r)} = \textnormal{ind} \, f_{B(r)}.
	\end{equation*}
\end{proposition}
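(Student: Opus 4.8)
The plan is to prove this as a Rouch\'e-type comparison result, using the homotopy invariance of the topological degree. The natural device is the straight-line homotopy between $f$ and $F$, namely
\[
	H(x,t) = f(x) + t\,g(x), \qquad x \in S^{n-1}(r),\ t \in [0,1],
\]
so that $H(\cdot,0) = f$ and $H(\cdot,1) = f + g = F$. Since $f$ and $g$ are differentiable, $H$ is jointly continuous in $(x,t)$. Before anything else I would record that the hypothesis $\lVert f(x) \rVert > \lVert g(x) \rVert \geq 0$ on $S^{n-1}(r)$ already forces $f(x) \neq 0$ there, so that the sphere map $\bar f$ and hence $\mathrm{ind}\,f_{B(r)}$ are well defined to begin with.

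The crucial step is to show that $H$ never vanishes on $S^{n-1}(r) \times [0,1]$. Suppose instead that $H(x,t) = 0$ for some such pair; then $f(x) = -t\,g(x)$, and taking norms gives $\lVert f(x) \rVert = t\,\lVert g(x) \rVert \leq \lVert g(x) \rVert$, where the last inequality uses $t \leq 1$. This contradicts the standing assumption $\lVert f(x) \rVert > \lVert g(x) \rVert$. Hence $H(x,t) \neq 0$ throughout the cylinder, and in particular, taking $t = 1$, we get $F(x) \neq 0$ on $S^{n-1}(r)$, so that $\mathrm{ind}\,F_{B(r)}$ is also well defined.

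With non-vanishing secured, the normalized map
\[
	\bar H(x,t) = \frac{H(x,t)}{\lVert H(x,t) \rVert}
\]
is a continuous map $S^{n-1}(r) \times [0,1] \to S^{n-1}(1)$, that is, a homotopy between $\bar f = \bar H(\cdot,0)$ and $\bar F = \bar H(\cdot,1)$. By the homotopy invariance of the topological degree, $\deg \bar f = \deg \bar F$, which by the definition of the index is precisely the desired identity $\mathrm{ind}\,f_{B(r)} = \mathrm{ind}\,F_{B(r)}$. I do not anticipate a serious obstacle; the only delicate point is to use the \emph{strict} inequality in the hypothesis, since it is exactly what rules out a zero of the homotopy at the endpoint $t = 1$ (a weak inequality would permit $f(x) = -g(x)$ and break the argument).
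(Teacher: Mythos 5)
Your proposal is correct and follows essentially the same route as the paper: the straight-line homotopy $f(x)+t\,g(x)$ normalized to the sphere, with homotopy invariance of the degree concluding the argument. The only difference is that you spell out explicitly why the homotopy never vanishes (the paper merely asserts that it "makes sense"), which is a welcome bit of extra care but not a different method.
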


The proof of this statement is an exercise in differential topology. We provide a proof for reader's convenience.

\begin{proof}
	Consider the homotopy $\hat{F}: [0,1] \times S^{n-1}(r) \to S^{n-1}(1)$,
	\begin{equation*}
		\hat{F}(t,x) = \frac{f(x) + tg(x)}{\lVert f(x) + tg(x) \rVert}, \; (t,x) \in [0,1] \times S^{n-1}(r).
	\end{equation*}
	Then $\hat{F}$ makes sense, and is a homotopy between $F$ and $f$:
	\begin{equation*}
		\hat{F}(0,x) = \bar{f}(x) = \frac{f(x)}{\lVert f(x) \rVert}, \; \hat{F}(1,x) = \bar{F}(x).
	\end{equation*}
	Since the index is homotopy invariant, then $\textnormal{ind} \, F_{B(r)} = \textnormal{ind} \, f_{B(r)}$.
\end{proof}

In particular, we have the following fact.

\begin{proposition}\label{pro:2-2}
	Let $A$ be a nonsingular matrix. Suppose $g: \mathbb{R}^n \to \mathbb{R}^n$ is bounded, then there is an $r>0$ such that the map $Ax + g(x)$ restricted to $B(r)$, has index $(-1)^m$, where $m$ is the number of eigenvalues of $A$ with negative real parts.
\end{proposition}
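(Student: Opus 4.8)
The plan is to feed $f(x)=Ax$ and the given $g$ into the preceding comparison proposition, reduce the index of $Ax+g(x)$ to that of the linear map $A$, and then identify this index with $(-1)^m$ through the sign of $\det A$. First I would fix the radius. Since $A$ is nonsingular its smallest singular value $\sigma_{\min}(A)$ is positive, so $\lVert Ax\rVert \ge \sigma_{\min}(A)\lVert x\rVert = \sigma_{\min}(A)\,r$ for every $x\in S^{n-1}(r)$. As $g$ is bounded, say $\lVert g(x)\rVert \le M$ for all $x$, any choice $r > M/\sigma_{\min}(A)$ forces $\lVert Ax\rVert > \lVert g(x)\rVert$ on $S^{n-1}(r)$. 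Applying the previous proposition with $f(x)=Ax$ then gives $\textnormal{ind}\,(Ax+g(x))_{B(r)} = \textnormal{ind}\,(Ax)_{B(r)}$, so the perturbation $g$ drops out entirely and only the linear part matters.

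Second I would compute $\textnormal{ind}\,(Ax)_{B(r)}$, i.e. the degree of the sphere map $x\mapsto Ax/\lVert Ax\rVert$ of a nonsingular linear map, and show it equals the sign of $\det A$. The tool is again homotopy invariance of the degree, now applied inside $GL(n,\mathbb{R})$: any continuous path of nonsingular matrices induces a homotopy of sphere maps that never vanishes on $S^{n-1}(r)$ and hence preserves degree. Since $GL(n,\mathbb{R})$ has exactly two connected components, distinguished by the sign of the determinant, every matrix of positive determinant deforms to the identity (whose sphere map is the identity, of degree $+1$) and every matrix of negative determinant deforms to a reflection such as $\mathrm{diag}(-1,1,\dots,1)$ (of degree $-1$). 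A concrete realization of such paths is the polar decomposition $A=QP$ with $Q$ orthogonal and $P$ symmetric positive definite: $P$ contracts to the identity through $P^{t}$, $t\in[0,1]$, and $Q$ contracts to a representative of its component of $O(n)$; all matrices along the way are nonsingular. Hence the degree equals the sign of $\det A$.

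Finally I would translate the sign of $\det A$ into $(-1)^m$. Writing $\det A$ as the product of eigenvalues counted with multiplicity, the complex eigenvalues occur in conjugate pairs, each contributing the positive factor $\lvert\lambda\rvert^{2}$ and thus not affecting the sign; a conjugate pair with negative real part adds $2$ to $m$ but contributes $+1$ to $(-1)^m$, while each real negative eigenvalue adds $1$ to $m$ and a single negative factor. Therefore the sign of $\det A$ is exactly $(-1)^m$, which finishes the argument. I expect the degree computation of the second step to be the only substantive point: the bound determining $r$ and the eigenvalue parity bookkeeping are routine, whereas justifying that the linear sphere map has degree equal to the sign of the determinant rests on knowing the two components of $GL(n,\mathbb{R})$ and on checking that the deformations keep the sphere map well defined.
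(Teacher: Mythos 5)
Your argument is correct and follows exactly the route the paper takes: it applies the preceding perturbation proposition with $f(x)=Ax$ for $r$ large, invokes $\mathrm{ind}\,\bar{A}_{B(r)}=\mathrm{sign}(\det A)$ (which the paper simply cites from Milnor), and converts $\mathrm{sign}(\det A)$ into $(-1)^m$ by the conjugate-pair bookkeeping. You merely supply the details (explicit choice of $r$ via $\sigma_{\min}(A)$, the $GL(n,\mathbb{R})$-component argument) that the paper leaves to the reference.
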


This fact is obvious, because for the map $\bar{A} = \frac{Ax}{\lVert Ax \rVert}$, one has ind\,$\bar{A}_{B(r)} = \text{sign}(\det A)$ by arguments in \cite{1965Topology}.

For a continuous map $f: \mathbb{R}^n \to \mathbb{R}^n$. Suppose that $f$ has only isolated zero points. Let $\bar{x}$ be a zero point of $f$, the index of at $\bar{x}$ is defined as
\begin{equation*}
	\textnormal{ind} \, f(\bar{x}) = \text{degree of} \; \bar{f}: S^{n-1}(\bar{x},\delta) \to S^{n-1}(1)
\end{equation*}
where
\begin{equation*}
	S^{n-1}(\bar{x}, \delta) = \{ x \in \mathbb{R}^n \vert \, \lVert x - \bar{x} \rVert = \delta \}
\end{equation*}
and
\begin{equation*}
	\bar{f}(x) = \frac{f(x)}{\lVert f(x) \rVert}, \; x \in S^{n-1}(x,\delta).
\end{equation*}
We have the following theorem.

\begin{theorem}\label{th:2-3}
	Suppose $f: B(r) \to \mathbb{R}^n$ has the following properties:
	\begin{enumerate}[1)]
		\item $\lVert f(x) \rVert \neq 0$ for $\forall x \in S^{n-1}(r)$.
		\item Every zero point of $f$ is isolated.
	\end{enumerate}
	Denote by $E$ the set of zero points in $B(r)$, then
	\begin{equation*}
		\sum_{x \in E} \textnormal{ind}_f \, (\bar{x}) = \textnormal{ind} \, f_{B(r)}.
	\end{equation*}
\end{theorem}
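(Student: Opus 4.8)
The plan is to derive the formula from the additivity (excision) property of the topological degree, isolating the zeros inside small balls and using that a sphere-valued map defined on a compact region with boundary contributes zero total degree on that boundary.

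First I would reduce to finitely many zeros. Since $B(r)$ is compact and every zero of $f$ is isolated by hypothesis~2), the set $E$ can have no accumulation point in $B(r)$, so $E = \{x_1,\dots,x_k\}$ is finite. As the $x_i$ are finitely many points lying in the open ball $\{x : \lVert x\rVert < r\}$ (they cannot lie on $S^{n-1}(r)$ by hypothesis~1)), I can pick $\delta>0$ so small that the closed balls $B(x_i,\delta)$ are pairwise disjoint and all contained in $\{x:\lVert x\rVert<r\}$.

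Next I would form the compact region
\[
\Omega = B(r) \setminus \bigcup_{i=1}^{k}\{x : \lVert x - x_i\rVert < \delta\},
\]
which is a manifold with boundary $\partial\Omega = S^{n-1}(r)\cup\bigcup_{i=1}^{k} S^{n-1}(x_i,\delta)$. By construction $f$ vanishes nowhere on $\Omega$, so $\bar f = f/\lVert f\rVert$ is a well-defined continuous map $\Omega \to S^{n-1}(1)$, and in particular it is defined on every component of $\partial\Omega$. The key point is that the total degree of $\bar f$ restricted to $\partial\Omega$ is zero, precisely because $\bar f$ extends over all of $\Omega$. In the differentiable setting this follows from Stokes' theorem: if $\omega$ is the volume form on $S^{n-1}(1)$, then $\int_{\partial\Omega}\bar f^{*}\omega = \int_{\Omega} d(\bar f^{*}\omega) = \int_{\Omega}\bar f^{*}(d\omega)=0$, since $d\omega=0$ (it is a top-degree form on $S^{n-1}(1)$). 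For a merely continuous $f$ one replaces this by the homological fact that $[\partial\Omega]$ maps to $0$ in $H_{n-1}(\Omega)$, or by a smooth approximation of $\bar f$.

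The last step, and the one where care is needed, is the orientation bookkeeping. Viewed as the boundary of $\Omega$, the outer sphere $S^{n-1}(r)$ carries its standard outward orientation, whereas each inner sphere $S^{n-1}(x_i,\delta)$ receives the orientation opposite to the one it has as the boundary of the ball $B(x_i,\delta)$. Splitting the vanishing total degree into its boundary components therefore yields
\[
0 = \textnormal{ind}\,f_{B(r)} - \sum_{i=1}^{k}\textnormal{ind}_{f}(x_i),
\]
which is exactly the claimed identity $\sum_{x\in E}\textnormal{ind}_{f}(\bar x) = \textnormal{ind}\,f_{B(r)}$. I expect the main obstacle to be getting this sign right, that is, verifying that the inner spheres enter with the correct (reversed) orientation so that the local indices add with a plus sign; the reduction to finitely many zeros and the vanishing-on-the-boundary step are both standard.
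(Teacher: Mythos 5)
Your proposal is correct and follows essentially the same route as the paper: excise small disjoint balls around the finitely many zeros, observe that $\bar f = f/\lVert f\rVert$ extends over the resulting manifold with boundary so its total boundary degree vanishes, and then account for the reversed orientation of the inner spheres. The only cosmetic difference is that you justify the vanishing of the boundary degree via Stokes' theorem (or homology), whereas the paper invokes the corresponding regular-value lemma from differential topology; these are two standard proofs of the same fact.
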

The proof is an elementary exercise in differential topology.

For reader's convenience, we will give a proof for differentiable maps, because continuous maps can be approximated by differentiable maps \cite{Brown1993A}. To prove this theorem, we need the following lemma which is adapted from \cite{1965Topology}.

\begin{lemma}
	Let $M$ be a compact oriented manifold, and $K = \partial M$ be the boundary of $M$. $N$ is a connected differentiable manifold. Suppose $\text{dim} K = \text{dim} N$. If a map $f: K \mapsto N$ can be extended to a differentiable map $F: M \mapsto N$, then for every regular value $y \in N$, the degree satisfies
	\begin{equation*}
		\deg (f, y) = 0.
	\end{equation*}
\end{lemma}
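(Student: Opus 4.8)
The plan is to exploit the dimension relation $\dim M = \dim K + 1 = \dim N + 1$ and to study the preimage of $y$ under the extension $F$, which is a one-dimensional object whose boundary is precisely $f^{-1}(y)$. First I would reduce to the case in which $y$ is a regular value of $F$ as well as of $f$: by Sard's theorem the regular values of $F$ are dense in $N$, and since $\deg(f,\cdot)$ is locally constant on the set of regular values of $f$ (because $f^{-1}(y)$ is a finite set of points at each of which $f$ is a local diffeomorphism), I can replace $y$ by a nearby common regular value of both $f$ and $F$ without changing $\deg(f,y)$. This replacement is harmless and lets the same symbol $y$ serve in the rest of the argument.

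With $y$ regular for $F$, the preimage $W = F^{-1}(y)$ is a compact submanifold of $M$ of dimension $\dim M - \dim N = 1$, and the preimage theorem for manifolds with boundary gives $\partial W = W \cap \partial M = f^{-1}(y)$, using that $y$ is also regular for $f = F|_{K}$. I would then invoke the classification of compact one-manifolds: $W$ is a finite disjoint union of circles and closed arcs. Circles have empty boundary, so every point of $f^{-1}(y) = \partial W$ is an endpoint of exactly one arc, and each arc contributes precisely two endpoints lying on $K$. Hence the points of $f^{-1}(y)$ are partitioned into pairs, one pair per arc.

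The crux — and the step I expect to be the main obstacle — is the orientation bookkeeping needed to show that the two endpoints of each arc contribute opposite signs to $\deg(f,y) = \sum_{x \in f^{-1}(y)} \text{sign}\, \det(df_x)$. Orienting $W$ by the preimage orientation induced from the orientations of $M$ and $N$, the induced boundary orientation assigns opposite orientations to the two ends of each arc; comparing this induced orientation at each endpoint with the orientation used to compute $\text{sign}\, \det(df_x)$ shows that the two endpoint signs are opposite. Summing over all arcs, the signed count cancels in pairs and vanishes. Making the three orientation conventions (on $M$, on $N$, and the boundary orientation on $W$) line up consistently is exactly the delicate point, and I would carry it out by following the standard one-manifold argument as in \cite{1965Topology}, which is precisely designed to verify that the contributions of the two ends of an oriented arc have opposite sign. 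This yields $\deg(f,y)=0$, completing the proof.
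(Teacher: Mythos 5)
Your argument is correct and coincides with the proof the paper implicitly relies on: the paper gives no proof of this lemma, deferring to the cited reference \cite{1965Topology}, and your reduction to a common regular value, the identification of $F^{-1}(y)$ as a compact $1$-manifold with boundary $f^{-1}(y)$, and the opposite-sign cancellation at the two endpoints of each arc is exactly that standard argument.
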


The proof of Theorem \ref{th:2-3}:

Since $B(r)$ is compact, it follows from 2) that $E$ contains finite number of zero points. By 1), $E \subset \text{int}\, B(r)$. For each $x \in E$, let $B_x \subset B(r)$ be a small open ball centered at $x$, so that $\bar{B} = B(r) - \bigcup_{x \in E}B_x$ is a manifold with boundary
\begin{equation*}
	\partial \bar{B} = S^{n-1}(r) \bigcup \cup_{x \in E}\partial B_x.
\end{equation*}

Now consider the map $\bar{f}: \partial \bar{B} \mapsto S^{n-1}(1)$
\begin{equation*}
	\bar{f}= \frac{f(x)}{\lVert f(x) \rVert},\; x \in \partial \bar{B}.
\end{equation*}

For a regular value $y$ of $\bar{f}$, it follows from the above lemma that
\begin{equation*}
	\deg (\bar{f}, y) =0.
\end{equation*}

Since the degree of $\bar{f}$ is independent of regular values:
\begin{equation*}
	\deg (\bar{f}) = 0.
\end{equation*}

On the other hand, by the properties of degree,
\begin{align*}
	\deg (\bar{f}) &= \deg (\bar{f}) \vert_{S^{n-1}(r)} - \sum_{x \in E} \deg (\bar{f}) \vert_{\partial B_x} \\
	&= \text{ind} f_{B(r)} - \sum_{x \in E} \text{ind} f_{B_x}.
\end{align*}

Therefore
\begin{equation*}
	\text{ind} f_{B(r)} = \sum_{x \in E} \text{ind} f_{B_x}.
\end{equation*}

The minus in $- \sum_{x \in E} \text{ind} f_{B_x}$ is due to the fact that the orientation of $\partial B_x$ is opposite to that of $S^{n-1}(r)$ (see \cite{1965Topology} for a discussion).

\section{The indices of linear control systems with saturated state feedback}
For the control system of the form
\begin{equation*}
	\dot{x} = Ax + Bu, \, x \in \mathbb{R}^n,\, u \in \mathbb{R}^m,\, \lVert u \rVert \leq M, M > 0,
\end{equation*}
where $\lVert u \rVert = \max \{ u_i \}, u = (u_1,\cdots, u_m)$.

What we are interested in this paper is the following proplems. Assume that $A$ is anti-stable, i.e., every eigenvalue of $A$ has positive real part. The system $(A,B)$ is controllable.

Define sat: $\mathbb{R} \to \mathbb{R}$ as $\text{sat}(s) = \text{sign}(s) \min \{ M, \, \lvert s \rvert \}$, and for $u \in \mathbb{R}^m$,
\begin{equation*}
	\text{sat}(u) = ( \text{sat}(u_1), \, \cdots , \, \text{sat}(u_m) )^T.
\end{equation*}

By the above assumption, it is easy to see that there is stabilizing state feedback $u = Kx$, such that the closed loop system
\begin{equation}\label{eq:3-1}
	\dot{x} = Ax + B \, \text{sat}(Kx)
\end{equation}
has the origin as its asymptotically stable equilibrium.

Since $A$ is anti-stable, the attraction basin is bounded, and the boundary of the attraction basin is of much interest from both of theoretical and practical point of view. On the other hand, the locations of other (unstable) equilibrium points are also of some interest because the "size" of the attraction basin can not be large to contain the equilibrium points other than the origin!

As noted in \cite{hu2001control}, system \eqref{eq:3-1} may has $3^m$ "potential" equilibrium points. However, only some of them are true equilibrium points.

Note that $K = (k_1, \cdots, k_m)^T$, where $k_i$ is row with $n$-entries.

\begin{definition}
	Consider the equation
	\begin{equation}\label{eq:3-2}
		Ax + B \text{sat}(Kx)  = 0.
	\end{equation}
	A zero point of \eqref{eq:3-2} is said to be in general position if it is not on the plane $k_i x = \pm M$, for every $i \in \{ 1, \cdots, n \}$.
\end{definition}

Clearly, in generic case, each zero point of \eqref{eq:3-2} is in general position. For convenience, we consider the control system with single input.

\begin{theorem}\label{th:}
	For control system with single input
	\begin{equation}\label{eq:3-3}
		\dot{x} = Ax + bu,\; x \in \mathbb{R}^n,
	\end{equation}
	let $u = kx$ be a stabilizing state feedback for \eqref{eq:3-3}, then generically the system
	\begin{equation}\label{eq:3-4}
		\dot{x} = Ax + b \, \text{sat}(kx),
	\end{equation}
	has a unique equilibrium point, the origin, if $n$ is an even number, and has three equilibrium points if $n$ is an odd number.
\end{theorem}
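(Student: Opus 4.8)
The plan is to locate every equilibrium explicitly by splitting $\mathbb{R}^n$ according to the value of $kx$, and then to read off their number from the parity of $n$ via a rank-one determinant identity. First I would note that the map $F(x)=Ax+b\,\mathrm{sat}(kx)$ is odd, $F(-x)=-F(x)$, because $\mathrm{sat}$ is odd; hence nonzero zeros occur in symmetric pairs $\pm x^{\ast}$. It then suffices to examine the three regions $\{|kx|\le M\}$, $\{kx>M\}$, $\{kx<-M\}$. In the central slab the feedback is unsaturated and $F(x)=(A+bk)x$; since $u=kx$ stabilizes, $A+bk$ is Hurwitz and therefore nonsingular, so the origin is its only zero. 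In each outer region $F(x)=Ax\pm bM$, and since $A$ is anti-stable (hence invertible) this gives exactly one candidate $x_\pm=\mp MA^{-1}b$. The candidate $x_+=-MA^{-1}b$ is a true equilibrium iff it lies in its own region, i.e. $kx_+>M$, which after dividing by $M>0$ reads $kA^{-1}b<-1$; by oddness $x_-$ is genuine under the same inequality. Thus the system has either one equilibrium or three, according to whether $kA^{-1}b<-1$ fails or holds.

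The crux is to tie the sign of $1+kA^{-1}b$ to the parity of $n$. Here I would invoke the rank-one determinant identity
\[
\det(A+bk)=\det(A)\,\bigl(1+kA^{-1}b\bigr),
\]
together with two sign facts. Since $A$ is anti-stable, its real eigenvalues are positive and its nonreal eigenvalues occur in conjugate pairs of positive product, so $\det A>0$. Since $A+bk$ is Hurwitz, the matrix $-(A+bk)$ is anti-stable, so $\det\bigl(-(A+bk)\bigr)>0$ by the same reasoning, and $\det(A+bk)=(-1)^{n}\det\bigl(-(A+bk)\bigr)$ yields $\operatorname{sign}\det(A+bk)=(-1)^{n}$. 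Combining these with the identity gives $\operatorname{sign}\bigl(1+kA^{-1}b\bigr)=(-1)^{n}$.

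Putting the pieces together finishes the proof: for $n$ even we get $1+kA^{-1}b>0$, so $kA^{-1}b>-1$, the outer candidates fail the region test, and the origin is the unique equilibrium; for $n$ odd we get $kA^{-1}b<-1$, so $x_\pm=\mp MA^{-1}b$ are both genuine, giving exactly three equilibria. The index machinery of Section 2 then serves as a clean consistency check. Applying Proposition \ref{pro:2-2} to $F=Ax+b\,\mathrm{sat}(kx)$ with bounded $g=b\,\mathrm{sat}(kx)$ and $m=0$, the total index on a large ball is $(-1)^{0}=1$; the index at the origin is $\operatorname{sign}\det(A+bk)=(-1)^{n}$, and the index at each $x_\pm$ is $\operatorname{sign}\det A=+1$, so the sum in Theorem \ref{th:2-3} is $(-1)^{n}=1$ for even $n$ (the origin alone) and $(-1)^{n}+1+1=-1+2=1$ for odd $n$ (origin plus $x_\pm$), matching the total index in both cases. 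I expect the main obstacle to be precisely the two sign computations for stable and anti-stable matrices, since they carry the parity that drives the dichotomy; the word ``generically'' is needed only to discard the degeneracy $kA^{-1}b=-1$ (which already contradicts the Hurwitz property of $A+bk$) and to keep the candidates off the switching planes $k_i x=\pm M$, so that the zeros are isolated and in general position as required by Theorem \ref{th:2-3}.
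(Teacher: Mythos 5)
Your proof is correct, but it takes a genuinely different route from the paper. The paper's argument is purely index-theoretic: by Proposition \ref{pro:2-2} the total index of $F(x)=Ax+b\,\mathrm{sat}(kx)$ on a large ball equals $\mathrm{ind}(Ax)_{B(r)}=1$, the origin contributes $(-1)^n$ (Hurwitz linearization $A+bk$), every other equilibrium lies in a saturated region and contributes $+1$ (anti-stable $A$), and Theorem \ref{th:2-3} forces the number of non-origin equilibria to be $1-(-1)^n$, i.e.\ $0$ or $2$. You instead solve for the equilibria explicitly region by region, reduce the existence of the outer candidates $x_\pm=\mp MA^{-1}b$ to the inequality $kA^{-1}b<-1$, and decide that inequality from the parity of $n$ via the rank-one identity $\det(A+bk)=\det(A)\bigl(1+kA^{-1}b\bigr)$ together with the sign facts $\det A>0$ and $\operatorname{sign}\det(A+bk)=(-1)^n$. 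What your approach buys is more information: it locates the nontrivial equilibria exactly, and it shows the dichotomy is unconditional rather than merely generic, since the only degenerate configuration $kA^{-1}b=-1$ (equivalently, a candidate sitting on the switching plane $kx=\pm M$) would force $\det(A+bk)=0$ and is already excluded by stabilizability; the paper needs the genericity hypothesis precisely to keep equilibria off those planes so that indices are well defined. What the paper's argument buys is brevity and a template that does not depend on being able to enumerate the saturation regions explicitly, which is the relevant consideration for the multi-input case with its $3^m$ potential equilibria. Your closing consistency check reproduces the paper's proof essentially verbatim, so the two arguments corroborate each other.
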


\begin{proof}
	In generic case every equilibrium point is not on the hyperplane $kx = \pm M$. Since the index of the origin is $(-1)^n$, and the other equilibrium point has index 1, because $A$ is anti-stable and all these equilibrium points are located off the saturated region.
	
	Now for $r$ sufficently large, following from Proposition \ref{pro:2-2}, we have
	\begin{equation*}
		\textnormal{ind}(Ax + b \, \text{sat}(kx))_{B(r)} = \textnormal{ind}(Ax)_{B(r)} = 1.
	\end{equation*}
	Thus by Theorem \ref{th:2-3},
	\begin{equation*}
		\sum_{x \in E}\textnormal{ind}_f \, (x) = 1, \; f = Ax + b \, \text{sat}(kx).
	\end{equation*}
	Consequently we have the conclusions in the theorem.
\end{proof}

\section{Further discussions on properties of attraction basin boundary}
Since our concern with the equilibrium points of the closed loop stabilized system is how to characterize the boundary of the attraction basin of the origin, we will give a brief discussion on the boundary topic in this section.

In view of the differential topology theory, the following is obvious.

\begin{proposition}
	For the stabilized closed loop system \eqref{eq:3-4}, if the attraction basin of the origin is homeomorphic to $S^{n-1}(1)$, then the equilibrium points other than the origin all lie on the boundary if $n$ is odd, and no equilibrium point lies on the boundary if $n$ is even.
\end{proposition}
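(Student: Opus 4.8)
The plan is to locate the nonzero equilibria by combining a simple dynamical observation with the Poincar\'e--Hopf theorem applied to the boundary sphere. First I would record the relevant data from Theorem \ref{th:}: besides the origin, which is asymptotically stable and hence interior to its own basin with index $(-1)^n$, the only other equilibria arise in the odd case as the two points lying in the saturated region. There $\mathrm{sat}(kx)$ is locally constant, so the Jacobian of $f = Ax + b\,\mathrm{sat}(kx)$ equals the anti-stable matrix $A$, and each such equilibrium is a source. The first step is then the elementary remark that a nonzero equilibrium cannot lie in the open attraction basin $\mathcal A$ of the origin, since its constant trajectory does not converge to the origin. Hence every nonzero equilibrium lies either on $\partial\mathcal A$ or outside $\overline{\mathcal A}$, and the whole problem is to decide which.

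Next I would use that $\partial\mathcal A$ is compact, flow-invariant, and, by hypothesis, homeomorphic to $S^{n-1}$. Because the boundary is invariant, $f$ is tangent to it, so it restricts to a tangent vector field $w$ on $S^{n-1}$ whose zeros are precisely the nonzero equilibria that happen to lie on $\partial\mathcal A$. The Poincar\'e--Hopf theorem then gives $\sum \mathrm{ind}\,w = \chi(S^{n-1}) = 1 + (-1)^{n-1}$, which equals $2$ when $n$ is odd and $0$ when $n$ is even.

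To turn this Euler-characteristic count into a count of equilibria, I would compute the surface index of a boundary equilibrium $\bar x$. The tangent space $T_{\bar x}\partial\mathcal A$ is preserved by the linearized flow $e^{tA}$, hence is an $A$-invariant subspace, and $A$ restricted to it again has all eigenvalues with positive real part; therefore $\bar x$ is a source for $w$ as well, with index $+1$ (a source has index $(-1)^0 = 1$ in every dimension). Consequently the number of boundary equilibria equals $\chi(S^{n-1})$. For odd $n$ this is $2$, and since Theorem \ref{th:} guarantees exactly two nonzero equilibria, both must lie on $\partial\mathcal A$; for even $n$ it is $0$, consistent with the origin being the only equilibrium. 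This yields the proposition.

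The step I expect to be the genuine obstacle is the passage from the purely topological hypothesis, that $\partial\mathcal A$ is homeomorphic to $S^{n-1}$, to the differentiable setting needed for Poincar\'e--Hopf: one must know that the basin boundary is a $C^1$ invariant submanifold to which $f$ is tangent and on which its zeros are isolated and nondegenerate. Basin boundaries can in general fail to be smooth, so this is exactly where care, or an added smoothness assumption, is required; everything else, namely the source computation and the index bookkeeping, is routine once tangency and smoothness are granted.
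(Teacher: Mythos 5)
Your argument is almost certainly the one the paper has in mind: the paper offers no proof beyond the remark that the claim is obvious ``in view of the differential topology theory,'' and the Euler-characteristic obstruction $\chi(S^{n-1})=1+(-1)^{n-1}$ applied to the restriction of the closed-loop field to the invariant boundary sphere is surely what is intended. Your bookkeeping is also correct: a nonzero equilibrium $\bar x$ sits in the saturated region, where the field is $x\mapsto A(x-\bar x)$, so it is a source of index $+1$ both in $\mathbb{R}^n$ and on any smooth invariant submanifold through it, and the resulting count ($2$ for odd $n$, $0$ for even $n$) matches Theorem~\ref{th:}. Note, though, that the even case needs no topology at all: by Theorem~\ref{th:} the origin is generically the only equilibrium, and it lies in the open basin, so nothing can lie on the boundary.

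The gap you flag at the end is genuine. The boundary of an invariant basin need not be a $C^1$ submanifold, and near a linear source an invariant topological sphere can fail to have a tangent space (for $A=I$ every cone of rays through the equilibrium is invariant), so Poincar\'e--Hopf for a tangent vector field does not literally apply. It can, however, be closed without adding a smoothness hypothesis. For odd $n$, apply the Lefschetz fixed point theorem to the time-$t$ maps $\phi_t$ restricted to $\partial D$: a compact topological $(n-1)$-manifold is an ANR, $\phi_t|_{\partial D}$ is homotopic to the identity, so its Lefschetz number equals $\chi(S^{n-1})=2\neq 0$ and each $\phi_{1/k}$ has a fixed point on $\partial D$; a subsequential limit of these fixed points as $k\to\infty$ is an equilibrium lying on $\partial D$. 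That equilibrium must be one of the two saturated equilibria $\pm\bar x$, and since $\mathrm{sat}$ is odd the right-hand side of \eqref{eq:3-4} satisfies $f(-x)=-f(x)$, so $D$ and $\partial D$ are symmetric about the origin and $-\bar x\in\partial D$ as well. This replaces your index count on a smooth sphere by a purely topological fixed-point count plus symmetry, and yields the proposition in full.
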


An interesting question is whether the boundary of the attraction basin is convex if it is homeomorphic to $S^{n-1}(1)$. It is well known that the null controllability region is convex if the input set is convex, and in the two dimensional case, it has been shown by Hu and Lin \cite{hu2001control} that attraction basin of the origin is bounded by a limit cycle. They also provided an elegant proof of convexity of the limit cycle. All these results give rise to the expectation that the boundary of attraction should be convex if it is homeomorphic to the sphere even in higher dimensional case.

Unfortunately, this is denied by the following example.

Consider the closed loop system \eqref{eq:3-4} with
\begin{equation}\label{eq:3-5}
	A =
	\begin{bmatrix}
		1 & -3 & 0 \\
		3 & 1 & 0 \\
		0 & 0 & 4
	\end{bmatrix},\;
	b =
	\begin{bmatrix}
		1 \\
		2 \\
		4
	\end{bmatrix},\;
	k = [\frac{7}{3}, -\frac{4}{3}, -\frac{35}{12}].
\end{equation}
The eigenvalues of $A + bk$ are -1,-2 and -3, hence the origin is asymptotically stable.

The attraction basin $D$ of the origin can be obtained by numerical simulation, as shown in Figure \ref{fig:1}. The boundary of $D$ is divided into two parts by a periodic orbit $\Gamma$, one of which is colored and the other is transparent. These two parts are symmetric about the origin.

\begin{figure}[!ht]
	\centering
	\includegraphics[width=0.4\textwidth]{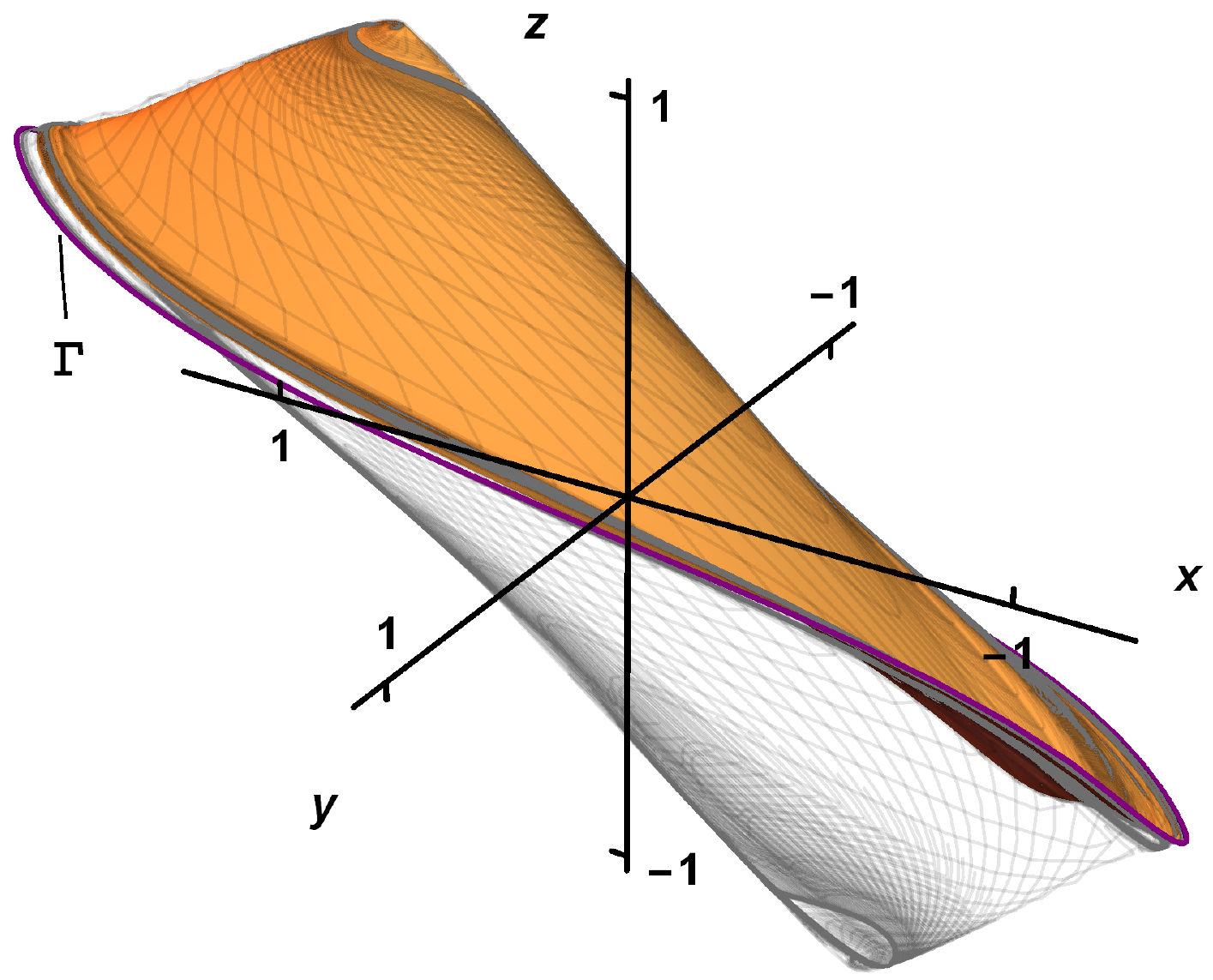}
	\caption{The boundary of the attraction basin of the origin of system \eqref{eq:3-4} with parameters \eqref{eq:3-5}.}\label{fig:1}
\end{figure}

In particular, let
\begin{align*}
	p_1 &= (-1.080860, -0.487008, -0.804244), \\
	p_2 &= (0.514148, -0.183494, 0.797384), \\
	p_3 &= (-0.283356, -0.335251, -0.003430),
\end{align*}
which $p_3$ is the midpoint of $p_1$ and $p_2$, i.e., $p_3 = (p_1 + p_3)/2$.

By numerical calculation, we find that $p_1, p_2 \in D$, but $p_3 \not\in D$. Three different trajectories starting from $p_1,p_2$ and $p_3$ respectively are shown in Figure \ref{fig:2}.

\begin{figure}[!ht]
	\centering
	\includegraphics[width=0.4\textwidth]{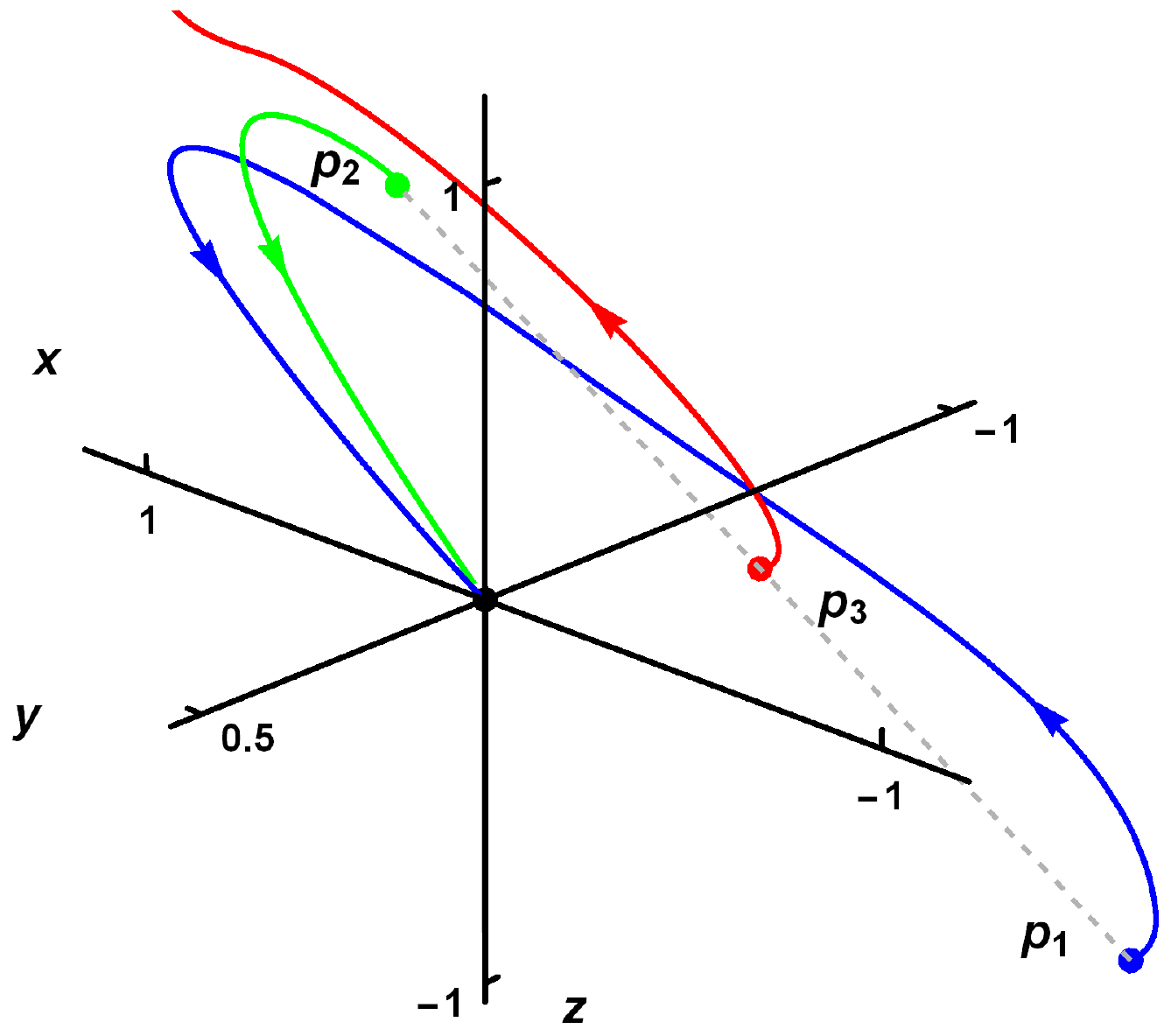}
	\caption{The trajectories of system \eqref{eq:3-4} with parameters \eqref{eq:3-5}.}\label{fig:2}
\end{figure}

This counter-example shows that the attraction basin of the origin of three-dimensional system \eqref{eq:3-2} can be non-convex, which is not possible in two-dimensional case.



\section{Acknowledgement}
This work is partially supported by National Natural Science Foundation of China (51979116).

\bibliography{ref}




\end{document}